\documentclass[12pt,a4paper]{article}

\usepackage[british]{babel}
\usepackage{amssymb,amsmath,amsthm,amsfonts,mathtools}
\usepackage{hyperref}
\usepackage{cleveref}
\usepackage[margin=2.5cm]{geometry}

\hypersetup{
    colorlinks=true,
    linkcolor=black,
    citecolor=black,
    urlcolor=blue
}

\theoremstyle{plain}
\newtheorem{theorem}{Theorem}[section]
\newtheorem{prop}[theorem]{Proposition}
\newtheorem{corollary}[theorem]{Corollary}
\newtheorem{fact}[theorem]{Fact}

\theoremstyle{definition}

\newtheorem{remark}[theorem]{Remark}

\newcommand{\E}{\mathbb{E}}
\newcommand{\C}{\mathbb{C}}
\newcommand{\R}{\mathbb{R}}
\newcommand{\Tr}{\operatorname{Tr}}
\newcommand{\GL}{\operatorname{GL}}
\newcommand{\one}{\mathbf{1}}
\newcommand{\Sub}{\operatorname{Sub}}

\title{Sidorenko Inequalities for Two-Sided Group Correlation Kernels}
\author{Yuqi Zhao\thanks{Email: \texttt{yuqi.zhao012@gmail.com}}}
\date{}

\begin{document}

\maketitle

\begin{abstract}
Sidorenko's conjecture asserts that every bipartite graph has at least the
expected homomorphism density in every graph of a given edge density. Motivated
by Cayley-type formulations of Sidorenko-type inequalities, we study a
two-sided correlation construction on finite groups.

Let \(\Gamma\) be a finite group and let \(f:\Gamma\to\R\) be a real-valued
function. We define a directed kernel on \(\Gamma\) by
\[
    \mathcal C_f(x,y)
    :=
    \frac{1}{|\Gamma|}
    \sum_{\substack{a_1,a_2\in\Gamma\\ xa_1=a_2y}}
    f(a_1)f(a_2)
    =
    \E_{z\in\Gamma} f(x^{-1}z)f(zy^{-1}).
\]
When \(f=\one_A\), this is the normalized size of the intersection
\(xA\cap Ay\).

We prove that, for every finite directed graph \(F\),
\[
    t(F,\mathcal C_f)
    \geq
    t(\overrightarrow{K_2},\mathcal C_f)^{e(F)}
    =
    \left(\E_{g\in\Gamma}f(g)\right)^{2e(F)}.
\]
Equivalently, if \(W_f^\times(x,y)=f(xy)\) is the directed product Cayley
kernel on \(\Gamma\), then the directed \(1\)-subdivision of every finite
directed graph satisfies the same homomorphism-density lower bound in
\(W_f^\times\).
\end{abstract}

\section{Introduction}

Sidorenko's conjecture is one of the central problems in extremal graph theory. It was proposed independently by
Sidorenko~\cite{sidorenko1993correlation,sidorenko1991inequalities} and
Erd\H{o}s--Simonovits~\cite{erdos1984cube}. In one of its standard forms, it
asserts that every bipartite graph \(H\) and every graph \(G\) satisfy
\[
    t(H,G)\geq t(K_2,G)^{e(H)}.
\]
Equivalently, among all host graphs with a fixed edge density, the random graph
has the smallest possible \(H\)-density.

The conjecture is known for several important families of bipartite graphs.
Sidorenko proved it for trees, even cycles, and complete bipartite graphs
~\cite{sidorenko1993correlation}. Later work verified the conjecture for
hypercubes~\cite{hatami2010graph}, bipartite graphs with one vertex complete to
the other part~\cite{conlon2010approximate}, and strongly tree-decomposable
graphs~\cite{conlon2018some}, among others. Subdivision graphs form another
important class in this context; see, for example,
\cite{conlon2018some,chen2024kohayakawa,im2024sidorenko}.

We shall use the following directed kernel notation. Let \((\Omega,\mu)\) be a
probability space, and let
\[
    W:\Omega\times\Omega\to\R
\]
be a bounded measurable kernel. For a finite directed graph \(F\), define
\begin{equation}\label{eq:directed-density}
    t(F,W)
    :=
    \int_{\Omega^{V(F)}}
    \prod_{uv\in E(F)} W(x_u,x_v)
    \prod_{v\in V(F)} d\mu(x_v).
\end{equation}
Here \(uv\) denotes a directed edge from \(u\) to \(v\). We write
\(\overrightarrow{K_2}\) for the directed graph consisting of one directed edge,
so that
\begin{equation}\label{eq:directed-edge-density}
    t(\overrightarrow{K_2},W)
    =
    \int_{\Omega^2} W(x,y)\,d\mu(x)d\mu(y).
\end{equation}
In the finite setting, which is the setting of this paper, \(\Omega\) is
equipped with the uniform probability measure. This notation also covers the
real-valued kernels that arise below from functions on finite groups.

Group kernels enter Sidorenko-type questions through highly symmetric host
graphs. A reduction of Szegedy~\cite{szegedy2015sparse} shows that Sidorenko's
conjecture can be tested on special bipartite Cayley-type hosts arising from
symmetric groups. In a Cayley form, one is led to kernels of the form
\[
    (x,y)\mapsto a(x^{-1}y),
\]
where \(a\) is a function on a finite group. This motivates the study of
homomorphism densities in algebraically structured kernels on finite groups.

The present paper studies a two-sided analogue of such Cayley-type kernels.
Instead of assigning weight according to a single group element \(x^{-1}y\), we
assign weight by counting weighted solutions of
\[
    xa_1=a_2y.
\]
In the finite setting considered below, every function on \(\Gamma\) is
automatically bounded. Let \(\Gamma\) be a finite group and let
\[
    f:\Gamma\to\R
\]
be a real-valued function. We define the two-sided group correlation kernel
\(\mathcal C_f:\Gamma\times\Gamma\to\R\) by
\begin{equation}\label{eq:two-sided-correlation-kernel}
    \mathcal C_f(x,y)
    :=
    \frac{1}{|\Gamma|}
    \sum_{\substack{a_1,a_2\in\Gamma\\ xa_1=a_2y}}
    f(a_1)f(a_2).
\end{equation}
Writing
\[
    z=xa_1=a_2y,
\]
we get the equivalent form
\begin{equation}\label{eq:common-point-form}
    \mathcal C_f(x,y)
    =
    \E_{z\in\Gamma} f(x^{-1}z)f(zy^{-1}).
\end{equation}
Equivalently, after the change of variables \(a=x^{-1}z\),
\begin{equation}\label{eq:single-variable-form}
    \mathcal C_f(x,y)
    =
    \E_{a\in\Gamma} f(a)f(xay^{-1}).
\end{equation}

The normalization in \eqref{eq:two-sided-correlation-kernel} gives the simple
edge-density identity
\begin{equation}\label{eq:edge-density}
\begin{aligned}
    t(\overrightarrow{K_2},\mathcal C_f)
    &=
    \E_{x,y\in\Gamma}\mathcal C_f(x,y)        \\
    &=
    \E_{x,y,z\in\Gamma} f(x^{-1}z)f(zy^{-1}) \\
    &=
    \left(\E_{g\in\Gamma}f(g)\right)^2.
\end{aligned}
\end{equation}
Indeed, if \(x,y,z\) are independent and uniformly distributed on \(\Gamma\),
then \(x^{-1}z\) and \(zy^{-1}\) are independent and uniformly distributed on
\(\Gamma\).

When \(f=\one_A\) for a subset \(A\subseteq\Gamma\),
\[
    \mathcal C_{\one_A}(x,y)
    =
    \frac{|xA\cap Ay|}{|\Gamma|}.
\]
Thus \(\mathcal C_{\one_A}(x,y)\) is the normalized number of ways to solve
\[
    xa_1=a_2y,
    \qquad a_1,a_2\in A.
\]
For general \(f\), the same expression gives the corresponding weighted count.

Our main theorem is the following.

\begin{theorem}[Main theorem]\label{thm:main}
Let \(\Gamma\) be a finite group, and let
\[
    f:\Gamma\to\R
\]
be a real-valued function. Let \(\mathcal C_f\) be defined by
\eqref{eq:common-point-form}. Then for every finite directed graph \(F\),
\[
    t(F,\mathcal C_f)
    \geq
    t(\overrightarrow{K_2},\mathcal C_f)^{e(F)}.
\]
Equivalently, by \eqref{eq:edge-density},
\[
    t(F,\mathcal C_f)
    \geq
    \left(\E_{g\in\Gamma}f(g)\right)^{2e(F)}.
\]
\end{theorem}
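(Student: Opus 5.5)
We will expand \(t(F,\mathcal C_f)\) in the irreducible representations of \(\Gamma\) and show two things: the contribution of the trivial representation is exactly \(\bigl(\E_g f(g)\bigr)^{2e(F)}\), and every other contribution is nonnegative. The hoped-for mechanism is that every term is a squared Hilbert--Schmidt norm.

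\emph{Fourier expansion of the kernel.} For each irreducible unitary representation \(\rho\) of dimension \(d_\rho\), put \(M_\rho:=\E_{a\in\Gamma} f(a)\rho(a)\). Fourier inversion reads \(f(g)=\sum_\rho d_\rho \Tr(M_\rho^{*}\rho(g))\), using \(\sum_\rho d_\rho\chi_\rho(h^{-1}g)=|\Gamma|\delta_{g,h}\). Substituting this into \eqref{eq:single-variable-form}, \(\mathcal C_f(x,y)=\E_a f(a)f(xay^{-1})\), and averaging over \(a\) gives
\[
    \mathcal C_f(x,y)=\sum_\rho d_\rho \Tr\bigl(M_\rho^{*}\rho(x)M_\rho\rho(y)^{*}\bigr).
\]
For the trivial representation this summand is \(\bigl(\E f\bigr)^2\), which matches \eqref{eq:edge-density}.

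\emph{Expansion of the density.} Insert this formula into \eqref{eq:directed-density} and expand the product over edges. Each choice of labelling \(e\mapsto\rho_e\) of \(E(F)\) by irreducibles gives one term. In that term, the variable \(x_v\) appears through \(\rho_e(x_v)\) for each edge \(e\) leaving \(v\), and through \(\rho_e(x_v)^{*}\) for each edge \(e\) entering \(v\). The average over \(x_v\) is then the Haar average of \(x\mapsto\bigotimes_{e\ \mathrm{out}}\rho_e(x)\otimes\bigotimes_{e\ \mathrm{in}}\overline{\rho_e(x)}\). This average is the orthogonal projection \(P_v\) onto the invariant subspace of that tensor product, so \(P_v\geq 0\).

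Collecting all edges, the term for \((\rho_e)_e\) becomes \(\prod_e d_{\rho_e}\) times a full tensor contraction. This contraction involves the operator \(K:=\bigotimes_e M_{\rho_e}\), its adjoint \(K^*\), and \(P:=\bigotimes_v P_v\), with the index spaces suitably reordered. The labelling in which every \(\rho_e\) is trivial contributes exactly \(\bigl(\E f\bigr)^{2e(F)}\).

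\emph{Positivity: the main step.} It remains to show that every other term is \(\geq 0\). The goal is to write each term as \(\Tr(PKQK^{*})\), where \(P\) and \(Q\) are positive operators, because then
\[
    \Tr(PKQK^{*})=\bigl\|P^{1/2}KQ^{1/2}\bigr\|_{HS}^{2}\geq 0.
\]
The structure of \(\mathcal C_f\) suggests this. In the form \(\Tr(M^*\rho(x)M\rho(y)^*)\), each edge carries one copy of \(M_\rho\) and one copy of \(M_\rho^*\), with the head group element sitting between them on one side and the tail group element on the other. Equivalently, in the subdivided product kernel \(f(x_uz_e)f(z_ex_v)\), each subdivision vertex \(z_e\) splits the edge into a left half and a right half.

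This is the step I expect to be the main obstacle. At a vertex \(v\) that has both in- and out-edges, \(P_v\) acts jointly on the \(M\)-side indices of the outgoing edges and the \(M^*\)-side indices of the incoming edges. One must therefore check that the contraction really has the sandwich form rather than a twisted pairing.

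My first attempt will be as follows. Each matrix \(M_\rho\) has a row index and a column index; I assign the row indices to the tail endpoint and the column indices to the head endpoint, and the reverse for \(M^*_\rho\). Then I verify by tracking indices that the term equals \(\langle \xi, (K\otimes \overline K)\,\xi\rangle\)-type expressions with \(\xi\) in the range of \(P\). Since \(K\otimes\overline K\) is positive on vectors of the form \(\operatorname{vec}(X)\), this would give nonnegativity.

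If the index bookkeeping fails, my fallback is to first average each \(x_v\) over right multiplication by a common Haar variable. This should symmetrise the incoming and outgoing sides and force the sandwich form. Once positivity holds, dropping all nontrivial labellings yields \(t(F,\mathcal C_f)\geq(\E f)^{2e(F)}=t(\overrightarrow{K_2},\mathcal C_f)^{e(F)}\).
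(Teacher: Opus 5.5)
Your expansion is correct as far as it goes: $\mathcal C_f(x,y)=\sum_\rho d_\rho\Tr(M_\rho^*\rho(x)M_\rho\rho(y)^*)$, the vertex averages $P_v$ are orthogonal projections, and the all-trivial labelling contributes $(\E f)^{2e(F)}$. The gap is the positivity of the remaining terms. You leave this open, and none of the three routes you sketch closes it.

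\emph{The sandwich.} The form $\Tr(PKQK^*)$, with one positive operator on the row (tail) side of $K$ and one on the column (head) side, is not what the contraction produces. The reason is that $P_v$ couples the row index of $M_{\rho_e}$ for $e$ leaving $v$ with the column index of $M_{\rho_{e'}}$ for $e'$ entering $v$. For a loop at $v$, for instance, the term is $|\Tr M_\rho|^2$. This contracts the row index of $M_\rho$ against its own column index, so it is not of the form $\Tr(PM_\rho QM_\rho^*)$ when $d_\rho\geq 2$.

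\emph{Your first attempt.} The index assignment (rows of $M$ to the tail, columns to the head, reversed for $M^*$) is the right one, but the target is wrong. The claim that $K\otimes\overline K$ is positive on vectors $\operatorname{vec}(X)$, where $\operatorname{vec}(X)_{(a,b)}=X_{ab}$, is false: $\langle\operatorname{vec}(X),(K\otimes\overline K)\operatorname{vec}(X)\rangle=\Tr(X^*KXK^*)=-2$ for $K=\left(\begin{smallmatrix}0&1\\1&0\end{smallmatrix}\right)$ and $X=\left(\begin{smallmatrix}0&1\\-1&0\end{smallmatrix}\right)$. Moreover, the operator that actually appears is the realignment $\operatorname{vec}(K)\operatorname{vec}(K)^*$, not $K\otimes\overline K$.

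\emph{The fallback.} It changes nothing. Each Fourier term is already an average over independent uniform $x_v$, so it is invariant under a common translation of all the $x_v$.

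The missing observation is that the split to track is $K$ versus $K^*$, not rows versus columns. Each $P_v$ draws all of its input indices from copies of $M_{\rho_e}$ (the row index if $e$ leaves $v$, the column index if $e$ enters $v$). It draws all of its output indices from the matching copies of $M_{\rho_e}^*$. Concretely,
$\Tr(M^*\rho(x)M\rho(y)^*)=\sum_{i,j,k,l}\overline{M_{ji}}\,\rho(x)_{jk}\,\overline{\rho(y)_{il}}\,M_{kl}$.
So put $\xi:=\bigotimes_e\operatorname{vec}(M_{\rho_e})\in\bigotimes_e(V_{\rho_e}\otimes\overline{V_{\rho_e}})$, and let $P=\bigotimes_v P_v$ act on this same space, with $P_{e^-}$ on the factor $V_{\rho_e}$ and $P_{e^+}$ on $\overline{V_{\rho_e}}$. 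Then the $(\rho_e)_e$-term is exactly $\prod_e d_{\rho_e}\langle P\xi,\xi\rangle=\prod_e d_{\rho_e}\|P\xi\|^2\geq 0$. No left/right factorisation of $P$ is needed, only that $P$ is an orthogonal projection on the space containing $\operatorname{vec}(K)$.

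With this step your argument is complete, and it is the paper's proof in compressed form. The paper expands $f(x_{e^-}z_e)$ and $f(z_ex_{e^+})$ with independent labels and averages $z_e$ into an edge block $A_{\rho,\sigma}$. It uses Schur's lemma to force $\sigma=\rho^\vee$, an intertwining identity to show $A_\rho\geq 0$, and concludes from $\Tr[(\bigotimes_e A_{\rho_e})P_F]\geq 0$. Your kernel-level expansion performs the $z_e$-average in advance. It exhibits that block as the rank-one matrix $d_\rho^{-1}\operatorname{vec}(M_\rho)\operatorname{vec}(M_\rho)^*$, after identifying $V_{\rho^\vee}$ with $\overline{V_\rho}$. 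This makes the Schur and intertwining steps unnecessary.
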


The theorem has an equivalent formulation in terms of directed subdivisions and
the directed product Cayley kernel
\[
    W_f^\times(x,y)=f(xy).
\]
In this form, every directed edge of \(F\) is replaced by a directed path of
length two with the same overall direction. This is often the more natural
formulation from the subdivision point of view, and it is stated precisely in
Section~\ref{sec:subdivision-formulation}.

The indicator-function case gives the following immediate consequence.

\begin{corollary}\label{cor:subset}
Let \(\Gamma\) be a finite group and let \(A\subseteq\Gamma\). Define
\[
    \mathcal C_A(x,y)
    :=
    \frac{|xA\cap Ay|}{|\Gamma|}.
\]
Then for every finite directed graph \(F\),
\[
    t(F,\mathcal C_A)
    \geq
    \left(\frac{|A|}{|\Gamma|}\right)^{2e(F)}.
\]
Equivalently,
\[
    t(F,\mathcal C_A)
    \geq
    t(\overrightarrow{K_2},\mathcal C_A)^{e(F)}.
\]
\end{corollary}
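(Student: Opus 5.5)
The corollary is the special case \(f=\one_A\) of Theorem~\ref{thm:main}. The plan is to check that the kernels and the edge densities agree, and then quote the theorem.

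\textbf{Step 1: the kernels coincide.} Take \(f=\one_A\), which is a real-valued function on the finite group \(\Gamma\), so Theorem~\ref{thm:main} applies. By the common-point form \eqref{eq:common-point-form},
\[
    \mathcal C_{\one_A}(x,y)
    =
    \E_{z\in\Gamma}\one_A(x^{-1}z)\one_A(zy^{-1})
    =
    \frac{1}{|\Gamma|}\,\#\{z\in\Gamma: z\in xA,\ z\in Ay\}
    =
    \frac{|xA\cap Ay|}{|\Gamma|}.
\]
Here we use that \(x^{-1}z\in A\) if and only if \(z\in xA\), and that \(zy^{-1}\in A\) if and only if \(z\in Ay\). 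Hence \(\mathcal C_{\one_A}=\mathcal C_A\) pointwise, and so \(t(F,\mathcal C_A)=t(F,\mathcal C_{\one_A})\) for every finite directed graph \(F\).

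\textbf{Step 2: the edge density.} We have \(\E_{g\in\Gamma}\one_A(g)=|A|/|\Gamma|\). By \eqref{eq:edge-density},
\[
    t(\overrightarrow{K_2},\mathcal C_A)=\left(\frac{|A|}{|\Gamma|}\right)^2 .
\]

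\textbf{Step 3: conclusion.} Applying Theorem~\ref{thm:main} with \(f=\one_A\) gives
\[
    t(F,\mathcal C_A)\geq \left(\frac{|A|}{|\Gamma|}\right)^{2e(F)}.
\]
By Step 2, the right-hand side equals \(t(\overrightarrow{K_2},\mathcal C_A)^{e(F)}\), which is the equivalent form stated in the corollary.

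There is no real obstacle. The only point needing care is the bookkeeping in Step 1: the conditions \(x^{-1}z\in A\) and \(zy^{-1}\in A\) must be matched correctly with the cosets \(xA\) and \(Ay\), and the factor \(1/|\Gamma|\) from the expectation over \(z\) must be tracked.
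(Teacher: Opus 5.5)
Your proposal is correct and follows the paper's route: the corollary is Theorem~\ref{thm:main} specialized to \(f=\one_A\), using the identity \(\mathcal C_{\one_A}(x,y)=|xA\cap Ay|/|\Gamma|\) stated in the introduction together with \eqref{eq:edge-density} and \(\E_{g\in\Gamma}\one_A(g)=|A|/|\Gamma|\). Your explicit check that \(x^{-1}z\in A\iff z\in xA\) and \(zy^{-1}\in A\iff z\in Ay\) supplies the one computation the paper leaves implicit.
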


We prove Theorem~\ref{thm:main} by Fourier expansion on the finite group. After
expanding every occurrence of \(f\), the average over each subdivision variable
forces the two representations on the corresponding edge to be contragredient.
For each such edge, the resulting matrix block is positive semidefinite. The
averages over the original vertices give commuting orthogonal projections; the
commutativity comes from the fact that, in each edge tensor factor, at most one
vertex average acts nontrivially. Consequently every Fourier contribution is
non-negative, and the trivial representation contributes exactly
\[
    \left(\E_{g\in\Gamma}f(g)\right)^{2e(F)}.
\]

The paper is organized as follows. In Section~\ref{sec:preliminaries}, we
collect the linear algebra and finite-group Fourier facts used in the proof. In
Section~\ref{sec:proof-main}, we prove Theorem~\ref{thm:main}. In
Section~\ref{sec:subdivision-formulation}, we state the equivalent
product-Cayley formulation for directed \(1\)-subdivisions and explain its
relation to \(1\)-subdivision results for conjugacy-averaged Cayley kernels.

\section{Preliminaries}\label{sec:preliminaries}

\subsection{Linear-algebraic tools}

All matrices in this paper are over \(\C\). For a matrix \(A\), we write
\(A^*\) for its conjugate transpose. If \(A_1,\ldots,A_k\) are square matrices,
then
\[
    \Tr(A_1\otimes\cdots\otimes A_k)
    =
    \prod_{i=1}^k \Tr(A_i).
\]
We shall also use the following tensor-product identity: if, for each \(i\),
the product \(A_iB_i\) is defined, then
\begin{equation}\label{eq:tensor-product-multiplication}
    \left(\bigotimes_{i=1}^k A_i\right)
    \left(\bigotimes_{i=1}^k B_i\right)
    =
    \bigotimes_{i=1}^k A_iB_i.
\end{equation}

A square matrix \(A\) is called positive semidefinite if \(A=A^*\) and
\[
    \langle Av,v\rangle\geq 0
    \qquad\text{for every vector }v.
\]
A matrix \(P\) is called an orthogonal projection if
\[
    P^2=P
    \qquad\text{and}\qquad
    P=P^*.
\]
Every orthogonal projection is positive semidefinite.

\begin{fact}\label{fact:linear-algebra}
The following facts hold.
\begin{enumerate}
    \item If \(A\) and \(B\) are positive semidefinite matrices of the same size,
    then
    \[
        \Tr(AB)\geq 0.
    \]

    \item If \(P_1,\ldots,P_k\) are pairwise commuting orthogonal projections,
    then
    \[
        P_1P_2\cdots P_k
    \]
    is an orthogonal projection.

    \item If \(A_1,\ldots,A_k\) are positive semidefinite matrices, then
    \[
        A_1\otimes\cdots\otimes A_k
    \]
    is positive semidefinite.
\end{enumerate}
\end{fact}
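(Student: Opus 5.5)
These three statements are standard, and I would prove each directly from the spectral theorem for Hermitian matrices together with the tensor identity \eqref{eq:tensor-product-multiplication}. None of the steps is a real obstacle. The only point needing care is in part 3: positivity must be checked for \emph{all} vectors, not just pure tensors, which is why I would argue via diagonalization rather than by evaluating \(\langle (A_1\otimes\cdots\otimes A_k)v,v\rangle\) on product vectors.

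\emph{Part 1.} Diagonalize \(B\) by the spectral theorem as
\[
    B=\sum_j \lambda_j v_jv_j^*,
\]
with \(\lambda_j\geq 0\) and \(\{v_j\}\) orthonormal. Then, by linearity and cyclicity of the trace,
\[
    \Tr(AB)=\sum_j\lambda_j\Tr(Av_jv_j^*)=\sum_j\lambda_j\langle Av_j,v_j\rangle .
\]
Each term is non-negative because \(A\) is positive semidefinite. An equivalent route is to write \(\Tr(AB)=\Tr(A^{1/2}BA^{1/2})\) and note that \(A^{1/2}BA^{1/2}\) is positive semidefinite.

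\emph{Part 2.} Induct on \(k\); it suffices to treat two commuting projections \(P,Q\), since \(P_1\cdots P_{k-1}\) is again a projection commuting with \(P_k\). For self-adjointness,
\[
    (PQ)^*=Q^*P^*=QP=PQ .
\]
For idempotence, using commutativity to regroup,
\[
    (PQ)^2=PQPQ=P^2Q^2=PQ .
\]

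\emph{Part 3.} First, Hermitian-ness follows from \((A_1\otimes\cdots\otimes A_k)^*=A_1^*\otimes\cdots\otimes A_k^*\). Next, write each factor as \(A_i=U_iD_iU_i^*\) with \(U_i\) unitary and \(D_i\) diagonal with non-negative entries. Applying \eqref{eq:tensor-product-multiplication} twice gives
\[
    \bigotimes_i A_i=U D U^*,
    \qquad U=\bigotimes_i U_i,\quad D=\bigotimes_i D_i .
\]
Here \(U\) is unitary, since by the same identity \(UU^*=\bigotimes_i U_iU_i^*=I\). The matrix \(D\) is diagonal, and its diagonal entries are products of non-negative numbers. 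Hence, for any vector \(v\), setting \(w=U^*v\),
\[
    \langle UDU^*v,v\rangle=\langle Dw,w\rangle\geq 0 .
\]
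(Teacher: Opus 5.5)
Your proof is correct and is essentially the paper's argument. Part 2 is the same commuting-projection computation, with the induction spelled out. In part 3, your factorization $A_i=U_iD_iU_i^*$ together with the mixed-product identity plays the role of the paper's $A_i=B_iB_i^*$ (take $B_i=U_iD_i^{1/2}$), and in part 1 your spectral expansion of $B$ is a minor variant of the paper's $\Tr(AB)=\Tr(A^{1/2}BA^{1/2})$ argument, which you also mention.
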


\begin{proof}
For the first assertion, write
\[
    \Tr(AB)=\Tr(A^{1/2}BA^{1/2}).
\]
The matrix \(A^{1/2}BA^{1/2}\) is positive semidefinite, so its trace is
non-negative.

For the second assertion, the product \(P=P_1\cdots P_k\) satisfies
\[
    P^2=P
    \qquad\text{and}\qquad
    P^*=P,
\]
because the projections commute pairwise.

For the third assertion, write \(A_i=B_iB_i^*\) for each \(i\). Then
\[
    A_1\otimes\cdots\otimes A_k
    =
    (B_1\otimes\cdots\otimes B_k)
    (B_1\otimes\cdots\otimes B_k)^*,
\]
which is positive semidefinite.
\end{proof}

\begin{fact}[Averaging projection]\label{fact:averaging-projection}
Let \(\pi:\Gamma\to\GL(V)\) be a finite-dimensional unitary representation of a
finite group \(\Gamma\). Then
\[
    \E_{g\in\Gamma}\pi(g)
\]
is the orthogonal projection onto the invariant subspace
\[
    V^\Gamma
    =
    \{v\in V:\pi(g)v=v\text{ for every }g\in\Gamma\}.
\]
\end{fact}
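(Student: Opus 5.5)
The statement to prove is the averaging-projection fact: for a finite-dimensional unitary representation \(\pi\), the operator \(P:=\E_{g\in\Gamma}\pi(g)\) is the orthogonal projection onto \(V^\Gamma\). The plan is to check directly the two defining properties of an orthogonal projection, \(P^2=P\) and \(P=P^*\), and then identify the image of \(P\) with \(V^\Gamma\).

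\emph{Absorption.} The key identity is \(\pi(h)P=P=P\pi(h)\) for every \(h\in\Gamma\). This holds because \(\pi(h)\pi(g)=\pi(hg)\), and as \(g\) runs uniformly over \(\Gamma\), so does \(hg\). The right-multiplication version is the same argument with \(gh\).

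\emph{Idempotence.} Averaging the absorption identity over \(h\) gives
\[
    P^2=\E_{h\in\Gamma}\pi(h)P=P.
\]

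\emph{Self-adjointness.} Unitarity gives \(\pi(g)^*=\pi(g)^{-1}=\pi(g^{-1})\). Since \(g\mapsto g^{-1}\) is a bijection of \(\Gamma\),
\[
    P^*=\E_{g\in\Gamma}\pi(g^{-1})=P.
\]
So \(P\) is an orthogonal projection.

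\emph{Image.} If \(v\in V^\Gamma\), then \(Pv=\E_g v=v\), so \(V^\Gamma\subseteq\operatorname{im}P\). Conversely, if \(v=Pw\), then absorption gives \(\pi(h)v=\pi(h)Pw=Pw=v\) for every \(h\), so \(v\in V^\Gamma\). Hence \(\operatorname{im}P=V^\Gamma\). An orthogonal projection is determined by its image, so \(P\) is the orthogonal projection onto \(V^\Gamma\).

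None of these steps is a real obstacle. The only point needing care is self-adjointness, which uses unitarity of \(\pi\). Without it, \(P\) would still be an idempotent with image \(V^\Gamma\), but not necessarily an orthogonal projection. Unitarity is exactly the hypothesis stated, and for a finite group it can always be arranged by averaging an inner product, though this is not needed here.
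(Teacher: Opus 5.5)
Your proof is correct and follows essentially the same route as the paper: self-adjointness comes from unitarity together with the bijection \(g\mapsto g^{-1}\), and idempotence comes from reindexing \(\E_{g,h}\pi(gh)\), which you phrase via the absorption identity \(\pi(h)P=P\). Your explicit two-inclusion argument that \(\operatorname{im}P=V^\Gamma\) fills in a step that the paper only asserts.
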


\begin{proof}
Let
\[
    P:=\E_{g\in\Gamma}\pi(g).
\]
Since \(\pi\) is unitary and \(g\mapsto g^{-1}\) is a bijection of \(\Gamma\),
we have
\[
    P^*
    =
    \E_{g\in\Gamma}\pi(g)^*
    =
    \E_{g\in\Gamma}\pi(g^{-1})
    =
    P.
\]
Moreover,
\[
    P^2
    =
    \E_{g,h\in\Gamma}\pi(gh)
    =
    \E_{u\in\Gamma}\pi(u)
    =
    P.
\]
Thus \(P\) is an orthogonal projection. Its image is exactly the subspace fixed
by \(\Gamma\).
\end{proof}

\subsection{Fourier analysis on finite groups}

Let \(\Gamma\) be a finite group. All representations are finite-dimensional
complex representations. We choose all irreducible representations to be
unitary, and we fix a set \(\widehat\Gamma\) of representatives of the
irreducible unitary representations of \(\Gamma\). For \(\rho\in\widehat\Gamma\),
write \(V_\rho\) for the representation space and
\[
    d_\rho:=\dim V_\rho.
\]
The contragredient representation of \(\rho\) is denoted by \(\rho^\vee\). We
choose the representatives so that contragredients are included in
\(\widehat\Gamma\). When the chosen representative of the contragredient class is
only unitarily equivalent to the literal dual representation, we fix such an
identification once and for all. Thus notation such as \(\sigma=\rho^\vee\)
below is understood with this convention.

For \(f:\Gamma\to\C\), define its Fourier coefficient at \(\rho\) by
\begin{equation}\label{eq:FT-def}
    \widehat f(\rho)
    :=
    \E_{g\in\Gamma} f(g)\rho(g)^{-1}.
\end{equation}
With this normalization, Fourier inversion gives
\begin{equation}\label{eq:Fourier-inversion}
    f(g)
    =
    \sum_{\rho\in\widehat\Gamma}
    d_\rho\,
    \Tr\!\left(\widehat f(\rho)\rho(g)\right)
    \qquad(g\in\Gamma).
\end{equation}

We shall use the following consequence of Schur's lemma; see, for example,
\cite{serre1977linear}.

\begin{fact}[Tensor averages of irreducible representations]\label{fact:tensor-average-dual}
Let \(\rho,\sigma\in\widehat\Gamma\). Define
\[
    \Pi_{\rho,\sigma}
    :=
    \E_{g\in\Gamma}\rho(g)\otimes\sigma(g).
\]
Then \(\Pi_{\rho,\sigma}\) is an orthogonal projection. Moreover,
\[
    \Pi_{\rho,\sigma}\neq 0
\]
only if
\[
    \sigma\simeq \rho^\vee.
\]
With the above convention for contragredients, the only nonzero case is written
as \(\sigma=\rho^\vee\).
\end{fact}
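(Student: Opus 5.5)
We need to prove Fact~\ref{fact:tensor-average-dual}: $\Pi_{\rho,\sigma}$ is an orthogonal projection, and it is nonzero only if $\sigma\simeq\rho^\vee$.

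The plan is to treat $g\mapsto\rho(g)\otimes\sigma(g)$ as a single representation and then identify its invariant vectors via Schur's lemma.

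\textbf{Step 1: the projection property.} The map $g\mapsto \rho(g)\otimes\sigma(g)$ is a unitary representation of $\Gamma$ on $V_\rho\otimes V_\sigma$. It is a homomorphism by \eqref{eq:tensor-product-multiplication}. It is unitary because the tensor product of unitaries is unitary: by the same identity, $(U\otimes U')^*(U\otimes U')=U^*U\otimes U'^*U'=I$. Fact~\ref{fact:averaging-projection} then shows that $\Pi_{\rho,\sigma}$ is the orthogonal projection onto the invariant subspace $(V_\rho\otimes V_\sigma)^\Gamma$.

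\textbf{Step 2: the nonvanishing criterion.} It remains to show that $(V_\rho\otimes V_\sigma)^\Gamma\neq0$ forces $\sigma\simeq\rho^\vee$.

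Identify $V_\rho\otimes V_\sigma$ with the space of linear maps $T:V_\sigma^*\to V_\rho$, or more concretely with $d_\rho\times d_\sigma$ matrices $X$. Under this identification $\rho(g)\otimes\sigma(g)$ acts by
\[
    X\mapsto \rho(g)X\sigma(g)^{T}.
\]
An invariant $X$ therefore satisfies $\rho(g)X=X(\sigma(g)^T)^{-1}$ for every $g$. The map $g\mapsto(\sigma(g)^T)^{-1}=\overline{\sigma(g)}$ is exactly the contragredient $\sigma^\vee$ in the chosen bases, so $X$ is a $\Gamma$-intertwiner from $\sigma^\vee$ to $\rho$.

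Both $\sigma^\vee$ and $\rho$ are irreducible, since the dual of an irreducible representation is irreducible. By Schur's lemma, a nonzero intertwiner between them is an isomorphism, so $\rho\simeq\sigma^\vee$, equivalently $\sigma\simeq\rho^\vee$. Under the paper's fixed convention this is written $\sigma=\rho^\vee$.

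The only step needing care is this identification of the tensor action with $X\mapsto\rho(g)X\sigma(g)^T$ and the recognition of $\overline{\sigma}$ as the contragredient. The rest is routine.
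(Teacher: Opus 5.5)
Your proposal is correct and follows the paper's route. You treat $g\mapsto\rho(g)\otimes\sigma(g)$ as a unitary representation, apply Fact~\ref{fact:averaging-projection} to get the projection property, and then use Schur's lemma on the invariant subspace. The only difference is that you make the Schur step explicit, by identifying invariant vectors with intertwiners from $\sigma^\vee$ to $\rho$ via $X\mapsto\rho(g)X\sigma(g)^{T}$; the paper leaves this identification implicit.
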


\begin{proof}
The map
\[
    g\mapsto \rho(g)\otimes\sigma(g)
\]
is a unitary representation of \(\Gamma\), so \(\Pi_{\rho,\sigma}\) is an
orthogonal projection by Fact~\ref{fact:averaging-projection}. Its image is the
invariant subspace of \(V_\rho\otimes V_\sigma\). By Schur's lemma, this
invariant subspace is nonzero if and only if \(V_\sigma\) is isomorphic to
\(V_\rho^\vee\).
\end{proof}

\section{Proof of the main theorem}\label{sec:proof-main}

We first prove a product-Cayley subdivision form of the desired inequality. This
form will also be used in Section~\ref{sec:subdivision-formulation}. For a
directed edge \(e\), write \(e^-\) and \(e^+\) for its tail and head.

\begin{prop}[Subdivision form]\label{prop:subdivision-form}
Let \(F=(V,E)\) be a finite directed graph, and let
\[
    f:\Gamma\to\R
\]
be a real-valued function. Then
\begin{equation}\label{eq:subdivision-form}
    \E_{x:V\to\Gamma,\ z:E\to\Gamma}
    \prod_{e\in E}
    f(x_{e^-}z_e)f(z_e x_{e^+})
    \geq
    \left(\E_{g\in\Gamma}f(g)\right)^{2e(F)}.
\end{equation}
\end{prop}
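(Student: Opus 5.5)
We expand every factor of \(f\) by Fourier inversion~\eqref{eq:Fourier-inversion}. For each edge \(e\) we introduce two representations: \(\rho_e\) for the factor \(f(x_{e^-}z_e)\) and \(\sigma_e\) for the factor \(f(z_e x_{e^+})\). The integrand then becomes
\[
    \sum_{(\rho_e,\sigma_e)_{e\in E}} \prod_{e} d_{\rho_e}d_{\sigma_e}\,
    \Tr\!\Big(\bigotimes_{e}\big(\widehat f(\rho_e)\rho_e(x_{e^-})\rho_e(z_e)\otimes \widehat f(\sigma_e)\sigma_e(z_e)\sigma_e(x_{e^+})\big)\Big),
\]
where we used multiplicativity of trace over tensor products. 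By cyclicity within each \(\sigma_e\) factor, we rewrite \(\Tr(\widehat f(\sigma_e)\sigma_e(z_e)\sigma_e(x_{e^+}))=\Tr(\sigma_e(x_{e^+})\widehat f(\sigma_e)\sigma_e(z_e))\). This way \(z_e\) appears in both factors of edge \(e\) as the combination \(\rho_e(z_e)\otimes\sigma_e(z_e)\), up to fixed matrices. We now take the average over \(z_e\) first. Placing \(\rho_e(z_e)\otimes\sigma_e(z_e)\) adjacent inside the trace via the tensor identity~\eqref{eq:tensor-product-multiplication}, the average becomes \(\Pi_{\rho_e,\sigma_e}\). By Fact~\ref{fact:tensor-average-dual}, this vanishes unless \(\sigma_e=\rho_e^\vee\). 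So only terms with \(\sigma_e=\rho_e^\vee\) for every \(e\) survive, indexed by \(\rho:E\to\widehat\Gamma\).

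Next we average over the vertex variables. For each vertex \(v\), the matrix \(\rho_e(x_v)\) appears in the \(\rho_e\)-slot of every edge with tail \(v\), and \(\rho_e^\vee(x_v)\) appears in the \(\sigma_e\)-slot of every edge with head \(v\). Hence, as an operator on \(\mathcal H=\bigotimes_e(V_{\rho_e}\otimes V_{\rho_e}^\vee)\), the variable \(x_v\) enters through a representation \(\tau_v\) of \(\Gamma\). This representation acts on a set of tensor slots \(S_v\), and the sets \(S_v\) for distinct \(v\) are disjoint, since each slot belongs to exactly one endpoint. (A loop \(e=vv\) puts two slots into \(S_v\), which is still fine.)

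The key bookkeeping step is to rearrange the trace into the form \(\Tr(M\,P_1\cdots P_{|V|}\,Q)\). Here \(Q=\bigotimes_e\Pi_{\rho_e,\rho_e^\vee}\), \(P_v=\E_{x_v}\tau_v(x_v)\), and \(M=\bigotimes_e M_e\) collects the Fourier coefficients. Each \(P_v\) is an orthogonal projection by Fact~\ref{fact:averaging-projection}. The projections \(P_v\) pairwise commute because they act on disjoint tensor slots, so \(P=\prod_vP_v\) is an orthogonal projection by Fact~\ref{fact:linear-algebra}. To make this rearrangement legitimate, I would order the operators within each edge block as \(\widehat f(\rho_e)\,\rho_e(x_{e^-})\,[\rho_e(z_e)\otimes\rho_e^\vee(z_e)]\,\rho_e^\vee(x_{e^+})\,\widehat f(\rho_e^\vee)\), using a single trace on \(V_{\rho}\otimes V_{\rho}^\vee\) after an appropriate partial transpose. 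I expect this to be the main obstacle. The natural device is to identify \(V_\rho\otimes V_\rho^\vee\cong\operatorname{End}(V_\rho)\), so that \(\Pi_{\rho,\rho^\vee}\) becomes the rank-one projection onto \(\frac{1}{\sqrt{d_\rho}}I\). The \(z_e\)-average then turns the edge contribution into \(\frac1{d_\rho}\Tr(\widehat f(\rho_e)\rho_e(x_{e^-})\rho_e(x_{e^+})\widehat f(\rho_e^\vee))\), up to transpose conventions.

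Finally we check positivity of each edge block. Since \(f\) is real and \(\rho^\vee(g)=\overline{\rho(g)}\), we have \(\widehat f(\rho^\vee)=\overline{\widehat f(\rho)}\). The contribution of edge \(e\) is therefore \(\frac{1}{d_\rho}\Tr(B_e\,\rho(x_{e^-})\rho(x_{e^+}))\), where \(B_e=\widehat f(\rho_e)^{T}\widehat f(\rho_e)\)-type products are matched so that the edge matrix \(M_e\), viewed on \(V_\rho\otimes V_\rho^\vee\), is \(A_e\otimes\overline{A_e}\)-shaped. Concretely, I would show that the surviving term equals \(\prod_e d_{\rho_e}^2\,\Tr(M P)\) with \(M=\bigotimes_e M_e\) and \(M_e=\operatorname{vec}(\widehat f(\rho_e))\operatorname{vec}(\widehat f(\rho_e))^*\), which is positive semidefinite. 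Then \(M\) is positive semidefinite by Fact~\ref{fact:linear-algebra}(3), and \(\Tr(MP)\ge0\) by Fact~\ref{fact:linear-algebra}(1). So every term is nonnegative. The term with all \(\rho_e\) trivial equals \(\prod_e\widehat f(1)^2=(\E f)^{2e(F)}\). Dropping the remaining nonnegative terms gives~\eqref{eq:subdivision-form}.
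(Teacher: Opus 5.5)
Your architecture matches the paper's: Fourier expansion, the $z_e$-average producing $\Pi_{\rho_e,\sigma_e}$ and forcing $\sigma_e=\rho_e^\vee$, commuting vertex projections on disjoint slots, and the trivial term. Your final target is also correct up to normalization: each surviving term is a positive multiple of $\Tr[(\bigotimes_e w_ew_e^*)P]$ with $P=\prod_vP_v$. But that identity, which carries all of the positivity, is asserted rather than derived, and the derivation you sketch does not work.

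After your cyclic rewrite, edge $e$ contributes $\Tr[(\widehat f(\rho_e)\otimes I)(\rho_e(x_{e^-})\otimes\sigma_e(x_{e^+}))(I\otimes\widehat f(\sigma_e))(\rho_e(z_e)\otimes\sigma_e(z_e))]$. So the averaged term is $\Tr[R\,P\,L\,Q]$ with $R=\bigotimes_e(\widehat f(\rho_e)\otimes I)$, $L=\bigotimes_e(I\otimes\widehat f(\sigma_e))$ and $Q=\bigotimes_e\Pi_{\rho_e,\sigma_e}$.
\begin{itemize}
\item You cannot collect $R$ and $L$ into one $M$ in front of $P$. $L$ acts on the $\sigma$-slots, where $P$ acts nontrivially, so $L$ and $P$ do not commute.
\item Even a form $\Tr(MPQ)$ with $M$ positive semidefinite would give no sign, because $P$ and $Q$ need not commute. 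For the directed $2$-cycle with $\rho_{e_1}=\rho_{e_2}=\rho$ and $d_\rho\ge2$, they are rank-one projections onto unit vectors with inner product $1/d_\rho$.
\item Your per-edge formula $\frac1{d_\rho}\Tr(\widehat f(\rho)\rho(x_{e^-})\rho(x_{e^+})\widehat f(\rho^\vee))$ is wrong as written. For a character $\chi$ it gives $|\widehat f(\chi)|^2\chi(x_{e^-})\chi(x_{e^+})$ instead of $|\widehat f(\chi)|^2\chi(x_{e^-})\overline{\chi(x_{e^+})}$. With $\rho^\vee=\overline\rho$, the correct value is $\frac1{d_\rho}\Tr(\widehat f(\rho)\rho(x_{e^-})\widehat f(\rho)^*\rho(x_{e^+})^{-1})$, in which the Fourier coefficients alternate with the vertex matrices.
\item An ``$A_e\otimes\overline{A_e}$-shaped'' matrix is in general neither rank one nor positive semidefinite, so it also contradicts your later rank-one claim.
\end{itemize}

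The missing step is a cyclic rotation to $\Tr[L\,Q\,R\,P]$. Since $L$, $Q$ and $R$ all factor over edges, $LQR=\bigotimes_eA_{\rho_e,\sigma_e}$ with $A_{\rho,\sigma}=(I\otimes\widehat f(\sigma))\Pi_{\rho,\sigma}(\widehat f(\rho)\otimes I)$. That is, the projection sits between the two Fourier coefficients, which act on different slots. You then need $A_\rho:=A_{\rho,\rho^\vee}\succeq0$, and this is exactly where real-valuedness of $f$ enters.

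The paper gets this from the intertwining identity $\Pi_\rho(\widehat f(\rho)\otimes I)=\Pi_\rho(I\otimes\widehat f(\rho^\vee)^*)$. This gives $A_\rho=BB^*$ with $B=(I\otimes\widehat f(\rho^\vee))\Pi_\rho$.

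Your $\operatorname{End}(V_\rho)$ picture also closes the gap. Take $\Pi_\rho=\frac1{d_\rho}\Omega\Omega^*$ with $\Omega=\sum_ie_i\otimes e_i$. Then the vectors $(I\otimes\overline{\widehat f(\rho)})\Omega$ and $(\widehat f(\rho)^*\otimes I)\Omega$ both equal $w=\operatorname{vec}(\widehat f(\rho)^*)$, so $A_\rho=\frac1{d_\rho}ww^*$. Consequently the prefactor is $\prod_ed_{\rho_e}$, not $\prod_ed_{\rho_e}^2$, with your normalization of $M_e$. With this in place, Fact~\ref{fact:linear-algebra} gives $\Tr[(\bigotimes_eA_{\rho_e})P]\ge0$ as you intended.
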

\begin{proof}
Write
\[
    m:=e(F).
\]
If \(m=0\), then both sides of \eqref{eq:subdivision-form} are equal to \(1\).
Assume henceforth that \(m\geq 1\).

For every edge \(e\in E\), expand the two factors
\[
    f(x_{e^-}z_e),
    \qquad
    f(z_e x_{e^+})
\]
using the Fourier inversion formula \eqref{eq:Fourier-inversion}. This gives a
sum over choices of irreducible representations
\[
    \boldsymbol\rho=(\rho_e)_{e\in E},
    \qquad
    \boldsymbol\sigma=(\sigma_e)_{e\in E}.
\]
For such a pair, set
\[
    \mathcal V_{\boldsymbol\rho,\boldsymbol\sigma}
    :=
    \bigotimes_{e\in E}
    \left(V_{\rho_e}\otimes V_{\sigma_e}\right).
\]

For \(\rho,\sigma\in\widehat\Gamma\), define the edge block
\begin{equation}\label{eq:edge-block-general}
    A_{\rho,\sigma}
    :=
    (I_\rho\otimes \widehat f(\sigma))
    \Pi_{\rho,\sigma}
    (\widehat f(\rho)\otimes I_\sigma),
\end{equation}
where
\[
    \Pi_{\rho,\sigma}
    :=
    \E_{g\in\Gamma}\rho(g)\otimes\sigma(g).
\]
By Fact~\ref{fact:tensor-average-dual},
\[
    A_{\rho,\sigma}=0
\]
unless \(\sigma=\rho^\vee\).

We next define the vertex projections. For \(v\in V\) and \(e\in E\), set
\[
    M^-(v,e):=
    \begin{cases}
    1,&\text{if }e^-=v,\\
    0,&\text{otherwise,}
    \end{cases}
    \qquad
    M^+(v,e):=
    \begin{cases}
    1,&\text{if }e^+=v,\\
    0,&\text{otherwise.}
    \end{cases}
\]
For each fixed edge \(e\), the column \((M^-(v,e))_{v\in V}\) has exactly one
\(1\), and the column \((M^+(v,e))_{v\in V}\) has exactly one \(1\).

For each \(v\in V\), define an operator \(P_v\) on
\(\mathcal V_{\boldsymbol\rho,\boldsymbol\sigma}\) by
\begin{equation}\label{eq:vertex-projection}
    P_v
    :=
    \E_{g\in\Gamma}
    \bigotimes_{e\in E}
    \left(
        \rho_e(g^{M^-(v,e)})
        \otimes
        \sigma_e(g^{M^+(v,e)})
    \right),
\end{equation}
where \(g^0\) denotes the identity element of \(\Gamma\), so that
\(\rho_e(g^0)=I_{\rho_e}\) and \(\sigma_e(g^0)=I_{\sigma_e}\). The integrand in
\eqref{eq:vertex-projection} is a unitary representation of \(\Gamma\). Hence
\(P_v\) is an orthogonal projection by Fact~\ref{fact:averaging-projection}.

Moreover, the projections \(P_v\) commute with one another. Indeed, if
\(v\neq v'\), then for every edge \(e\),
\[
    M^-(v,e)M^-(v',e)=0,
    \qquad
    M^+(v,e)M^+(v',e)=0.
\]
Thus, in each tensor factor \(V_{\rho_e}\) or \(V_{\sigma_e}\), at most one of
the two pointwise operators appearing in the definitions of \(P_v\) and
\(P_{v'}\) is non-identity. Hence the integrands commute pointwise, and after
averaging over the two group variables we obtain
\[
    P_vP_{v'}=P_{v'}P_v.
\]
Therefore
\begin{equation}\label{eq:PF-definition}
    P_F(\boldsymbol\rho,\boldsymbol\sigma)
    :=
    \prod_{v\in V}P_v
\end{equation}
is an orthogonal projection by Fact~\ref{fact:linear-algebra}. Since the factors
\(P_v\) commute, the product is independent of the order of the vertices.

We now compute the Fourier expansion. For a fixed edge \(e\), cyclicity of
trace gives
\begin{align}
&\Tr\!\left(\widehat f(\rho_e)\rho_e(x_{e^-}z_e)\right)
 \Tr\!\left(\widehat f(\sigma_e)\sigma_e(z_e x_{e^+})\right)
 \notag\\
&\qquad =
\Tr\!\left[
    \left(
        \rho_e(z_e)\widehat f(\rho_e)
        \otimes
        \widehat f(\sigma_e)\sigma_e(z_e)
    \right)
    \left(
        \rho_e(x_{e^-})
        \otimes
        \sigma_e(x_{e^+})
    \right)
\right].
\label{eq:edge-trace-rewrite}
\end{align}
Averaging the \(z_e\)-dependent factor in \eqref{eq:edge-trace-rewrite} gives
\[
\begin{aligned}
&\E_{z_e\in\Gamma}
    \rho_e(z_e)\widehat f(\rho_e)
    \otimes
    \widehat f(\sigma_e)\sigma_e(z_e)        \\
&\qquad =
    (I_{\rho_e}\otimes\widehat f(\sigma_e))
    \left(
        \E_{z_e\in\Gamma}
        \rho_e(z_e)\otimes\sigma_e(z_e)
    \right)
    (\widehat f(\rho_e)\otimes I_{\sigma_e}) \\
&\qquad =
    A_{\rho_e,\sigma_e}.
\end{aligned}
\]

Using \eqref{eq:tensor-product-multiplication} and
\[
    \prod_{e\in E}\Tr(T_e)
    =
    \Tr\!\left(\bigotimes_{e\in E}T_e\right),
\]
the \((\boldsymbol\rho,\boldsymbol\sigma)\)-summand, after pulling out the
coefficient
\[
    \prod_{e\in E}d_{\rho_e}d_{\sigma_e},
\]
has the following form before averaging over the variables:
\[
\Tr\!\left[
    \left(
        \bigotimes_{e\in E}
        \left(
            \rho_e(z_e)\widehat f(\rho_e)
            \otimes
            \widehat f(\sigma_e)\sigma_e(z_e)
        \right)
    \right)
    Q(x)
\right],
\]
where
\[
    Q(x)
    :=
    \bigotimes_{e\in E}
    \left(
        \rho_e(x_{e^-})\otimes\sigma_e(x_{e^+})
    \right).
\]
The variables \(z_e\) are independent, and \(Q(x)\) is independent of them.
Therefore, by linearity of trace and the preceding computation,
\[
\begin{aligned}
&\E_{z:E\to\Gamma}
\Tr\!\left[
    \left(
        \bigotimes_{e\in E}
        \left(
            \rho_e(z_e)\widehat f(\rho_e)
            \otimes
            \widehat f(\sigma_e)\sigma_e(z_e)
        \right)
    \right)
    Q(x)
\right]  \\
&\qquad =
\Tr\!\left[
    \left(
        \bigotimes_{e\in E}A_{\rho_e,\sigma_e}
    \right)
    Q(x)
\right].
\end{aligned}
\]

It remains to average \(Q(x)\) over the vertex variables. For \(v\in V\) and
\(g\in\Gamma\), write
\[
    R_v(g)
    :=
    \bigotimes_{e\in E}
    \left(
        \rho_e(g^{M^-(v,e)})
        \otimes
        \sigma_e(g^{M^+(v,e)})
    \right).
\]
Then
\[
    Q(x)
    =
    \prod_{v\in V} R_v(x_v).
\]
Indeed, on the tensor factor \(V_{\rho_e}\), only the tail vertex \(e^-\) acts
nontrivially, and on the tensor factor \(V_{\sigma_e}\), only the head vertex
\(e^+\) acts nontrivially. The factors \(R_v(x_v)\) commute pairwise for the
same reason. Hence, by independence of the variables \(x_v\),
\[
\begin{aligned}
    \E_{x:V\to\Gamma}Q(x)
    &=
    \prod_{v\in V}\E_{g\in\Gamma}R_v(g)  \\
    &=
    \prod_{v\in V}P_v  \\
    &=
    P_F(\boldsymbol\rho,\boldsymbol\sigma).
\end{aligned}
\]
Consequently,
\begin{equation}\label{eq:full-fourier-expansion}
\begin{aligned}
&\E_{x:V\to\Gamma,\ z:E\to\Gamma}
    \prod_{e\in E}
    f(x_{e^-}z_e)f(z_e x_{e^+}) \\
&\qquad =
\sum_{\boldsymbol\rho,\boldsymbol\sigma\in\widehat\Gamma^{E}}
\left(
    \prod_{e\in E}d_{\rho_e}d_{\sigma_e}
\right)
\Tr\!\left[
    \left(
        \bigotimes_{e\in E}A_{\rho_e,\sigma_e}
    \right)
    P_F(\boldsymbol\rho,\boldsymbol\sigma)
\right].
\end{aligned}
\end{equation}

Only dual pairs contribute. For \(\rho\in\widehat\Gamma\), put
\[
    \Pi_\rho:=\Pi_{\rho,\rho^\vee}
    =
    \E_{g\in\Gamma}\rho(g)\otimes\rho^\vee(g),
\]
and
\begin{equation}\label{eq:edge-block-dual}
    A_\rho
    :=
    A_{\rho,\rho^\vee}
    =
    (I_\rho\otimes \widehat f(\rho^\vee))
    \Pi_\rho
    (\widehat f(\rho)\otimes I_{\rho^\vee}).
\end{equation}
We claim that \(A_\rho\) is positive semidefinite. For every \(h\in\Gamma\),
\begin{align*}
    \Pi_\rho(\rho(h^{-1})\otimes I_{\rho^\vee})
    &=
    \E_{x\in\Gamma}
    \rho(xh^{-1})\otimes\rho^\vee(x)\\
    &=
    \E_{y\in\Gamma}
    \rho(y)\otimes\rho^\vee(yh)\\
    &=
    \Pi_\rho(I_\rho\otimes\rho^\vee(h)),
\end{align*}
where we used the change of variables \(y=xh^{-1}\). Averaging this identity
with coefficient \(f(h)\), and using the definition \eqref{eq:FT-def}, gives
\begin{equation}\label{eq:intertwining}
    \Pi_\rho(\widehat f(\rho)\otimes I_{\rho^\vee})
    =
    \Pi_\rho(I_\rho\otimes\widehat f(\rho^\vee)^*).
\end{equation}
Here we used that \(f\) is real-valued. More explicitly, since
\(\rho^\vee\) is unitary,
\[
\begin{aligned}
    \widehat f(\rho^\vee)^*
    &=
    \left(
        \E_{h\in\Gamma}f(h)\rho^\vee(h)^{-1}
    \right)^*  \\
    &=
    \E_{h\in\Gamma}f(h)\rho^\vee(h).
\end{aligned}
\]
This is the only point in the proof where real-valuedness of \(f\) is used.

By \eqref{eq:edge-block-dual} and \eqref{eq:intertwining},
\[
\begin{aligned}
    A_\rho
    &=
    (I_\rho\otimes \widehat f(\rho^\vee))
    \Pi_\rho
    (I_\rho\otimes \widehat f(\rho^\vee)^*)\\
    &=
    \left[(I_\rho\otimes \widehat f(\rho^\vee))\Pi_\rho\right]
    \left[(I_\rho\otimes \widehat f(\rho^\vee))\Pi_\rho\right]^*,
\end{aligned}
\]
where we used that \(\Pi_\rho\) is an orthogonal projection by
Fact~\ref{fact:tensor-average-dual}. Hence \(A_\rho\) is positive semidefinite.

Since \(A_{\rho,\sigma}=0\) unless \(\sigma=\rho^\vee\), the expansion
\eqref{eq:full-fourier-expansion} reduces to
\begin{equation}\label{eq:dual-fourier-expansion}
\begin{aligned}
&\E_{x:V\to\Gamma,\ z:E\to\Gamma}
    \prod_{e\in E}
    f(x_{e^-}z_e)f(z_e x_{e^+}) \\
&\qquad =
\sum_{\boldsymbol\rho\in\widehat\Gamma^{E}}
\left(
    \prod_{e\in E}d_{\rho_e}^{2}
\right)
\Tr\!\left[
    \left(
        \bigotimes_{e\in E}A_{\rho_e}
    \right)
    P_F(\boldsymbol\rho)
\right],
\end{aligned}
\end{equation}
where
\[
    \boldsymbol\rho^\vee:=(\rho_e^\vee)_{e\in E},
    \qquad
    P_F(\boldsymbol\rho)
    :=
    P_F(\boldsymbol\rho,\boldsymbol\rho^\vee).
\]
For each edge \(e\), the matrix \(A_{\rho_e}\) is positive semidefinite. Hence
\[
    \bigotimes_{e\in E}A_{\rho_e}
\]
is positive semidefinite by Fact~\ref{fact:linear-algebra}. The operator
\(P_F(\boldsymbol\rho)\) is an orthogonal projection, hence positive
semidefinite. Therefore, again by Fact~\ref{fact:linear-algebra},
\[
    \Tr\!\left[
    \left(
        \bigotimes_{e\in E}A_{\rho_e}
    \right)
    P_F(\boldsymbol\rho)
    \right]
    \geq 0.
\]
Thus every Fourier contribution in \eqref{eq:dual-fourier-expansion} is
non-negative.

It remains to identify the trivial-representation contribution. Let \(\one\)
denote the trivial representation of \(\Gamma\). Then
\[
    d_{\one}=1,
    \qquad
    \widehat f(\one)=\E_{g\in\Gamma}f(g),
    \qquad
    \Pi_{\one}=1.
\]
Thus
\[
    A_{\one}
    =
    \left(\E_{g\in\Gamma}f(g)\right)^2.
\]
If \(\rho_e=\one\) for every \(e\in E\), then the whole tensor space is
one-dimensional and \(P_F(\boldsymbol\rho)=1\). Hence the corresponding summand
in \eqref{eq:dual-fourier-expansion} is exactly
\[
    \left(\E_{g\in\Gamma}f(g)\right)^{2m}.
\]
Since all other summands are non-negative, we obtain
\[
    \E_{x:V\to\Gamma,\ z:E\to\Gamma}
    \prod_{e\in E}
    f(x_{e^-}z_e)f(z_e x_{e^+})
    \geq
    \left(\E_{g\in\Gamma}f(g)\right)^{2m}.
\]
As \(m=e(F)\), this proves the proposition.
\end{proof}

We now prove Theorem~\ref{thm:main}.

\begin{proof}[Proof of Theorem~\ref{thm:main}]
By definition of \(\mathcal C_f\),
\[
\begin{aligned}
    t(F,\mathcal C_f)
    &=
    \E_{x:V(F)\to\Gamma}
    \prod_{e\in E(F)}
    \mathcal C_f(x_{e^-},x_{e^+})\\
    &=
    \E_{x:V(F)\to\Gamma,\ z:E(F)\to\Gamma}
    \prod_{e\in E(F)}
    f(x_{e^-}^{-1}z_e)f(z_e x_{e^+}^{-1}).
\end{aligned}
\]
Make the change of variables
\[
    u_v=x_v^{-1}
    \qquad(v\in V(F)).
\]
Since inversion preserves the uniform measure on \(\Gamma\), the last
expression becomes
\[
    \E_{u:V(F)\to\Gamma,\ z:E(F)\to\Gamma}
    \prod_{e\in E(F)}
    f(u_{e^-}z_e)f(z_eu_{e^+}).
\]
Applying Proposition~\ref{prop:subdivision-form}, we get
\[
    t(F,\mathcal C_f)
    \geq
    \left(\E_{g\in\Gamma}f(g)\right)^{2e(F)}.
\]
On the other hand, by \eqref{eq:edge-density},
\[
    t(\overrightarrow{K_2},\mathcal C_f)
    =
    \left(\E_{g\in\Gamma}f(g)\right)^2.
\]
Therefore
\[
    t(F,\mathcal C_f)
    \geq
    t(\overrightarrow{K_2},\mathcal C_f)^{e(F)}.
\]
This proves the theorem.
\end{proof}

\section{The product Cayley formulation}\label{sec:subdivision-formulation}

We now record the product Cayley formulation of Theorem~\ref{thm:main}, which
is often the more natural formulation from the subdivision viewpoint.

Let \(F=(V,E)\) be a finite directed graph. Its directed \(1\)-subdivision,
denoted by
\[
    \Sub_1^{\to}(F),
\]
is the directed graph with vertex set
\[
    V\sqcup E,
\]
where each edge \(e\in E\) is regarded as a new subdivision vertex, and with
directed edges
\[
    e^-\longrightarrow e,
    \qquad
    e\longrightarrow e^+
\]
for every \(e\in E\). Thus every directed edge of \(F\) is replaced by a
directed path of length two with the same overall direction.

For a real-valued function \(f:\Gamma\to\R\), define the directed product
Cayley kernel
\[
    W_f^\times:\Gamma\times\Gamma\to\R
\]
by
\begin{equation}\label{eq:product-cayley-kernel}
    W_f^\times(x,y):=f(xy).
\end{equation}
Then
\begin{equation}\label{eq:product-cayley-edge-density}
    t(\overrightarrow{K_2},W_f^\times)
    =
    \E_{x,y\in\Gamma}f(xy)
    =
    \E_{g\in\Gamma}f(g).
\end{equation}

Counting \(\Sub_1^\to(F)\) in \(W_f^\times\), a map from the old vertices
\(V\) to \(\Gamma\) will be denoted by \(x:V\to\Gamma\), and a map from the
subdivision vertices \(E\) to \(\Gamma\) will be denoted by \(z:E\to\Gamma\).
By \eqref{eq:product-cayley-kernel},
\begin{equation}\label{eq:subdivision-product-density}
\begin{aligned}
    t(\Sub_1^{\to}(F),W_f^\times)
    &=
    \E_{x:V\to\Gamma,\ z:E\to\Gamma}
    \prod_{e\in E}
    W_f^\times(x_{e^-},z_e)
    W_f^\times(z_e,x_{e^+})\\
    &=
    \E_{x:V\to\Gamma,\ z:E\to\Gamma}
    \prod_{e\in E}
    f(x_{e^-}z_e)f(z_ex_{e^+}).
\end{aligned}
\end{equation}
Thus Proposition~\ref{prop:subdivision-form} gives
\[
    t(\Sub_1^{\to}(F),W_f^\times)
    \geq
    \left(\E_{g\in\Gamma}f(g)\right)^{2e(F)}.
\]
Since
\[
    e(\Sub_1^{\to}(F))=2e(F),
\]
we obtain
\begin{equation}\label{eq:product-subdivision-sidorenko}
    t(\Sub_1^{\to}(F),W_f^\times)
    \geq
    t(\overrightarrow{K_2},W_f^\times)^{e(\Sub_1^{\to}(F))}.
\end{equation}

\begin{corollary}\label{cor:product-cayley-subdivision}
Let \(F\) be a finite directed graph, let \(\Gamma\) be a finite group, and let
\(f:\Gamma\to\R\). Define \(W_f^\times(x,y)=f(xy)\). Then
\[
    t(\Sub_1^{\to}(F),W_f^\times)
    \geq
    t(\overrightarrow{K_2},W_f^\times)^{e(\Sub_1^{\to}(F))}.
\]
\end{corollary}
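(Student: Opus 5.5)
The corollary follows from Proposition~\ref{prop:subdivision-form} by identifying both sides; no new inequality is needed. The plan is to expand the homomorphism density of \(\Sub_1^{\to}(F)\) in \(W_f^\times\), compute the edge density of \(W_f^\times\), and then compare exponents.

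First, we unfold the definition \eqref{eq:directed-density} for the directed graph \(\Sub_1^{\to}(F)\). Its vertex set is \(V\sqcup E\), so a homomorphism consists of \(x:V\to\Gamma\) together with \(z:E\to\Gamma\). Its edges are exactly \(e^-\to e\) and \(e\to e^+\) for each \(e\in E\). The integrand is therefore \(\prod_{e\in E}W_f^\times(x_{e^-},z_e)W_f^\times(z_e,x_{e^+})\), which by \eqref{eq:product-cayley-kernel} equals \(\prod_{e\in E} f(x_{e^-}z_e)f(z_ex_{e^+})\). This is \eqref{eq:subdivision-product-density}. It coincides literally with the left-hand side of \eqref{eq:subdivision-form}, so Proposition~\ref{prop:subdivision-form} gives the lower bound \(\bigl(\E_{g\in\Gamma}f(g)\bigr)^{2e(F)}\).

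Second, we compute the edge density \eqref{eq:product-cayley-edge-density}. For each fixed \(x\), the map \(y\mapsto xy\) is a bijection of \(\Gamma\), so \(\E_{x,y}f(xy)=\E_g f(g)\). Third, every edge of \(F\) contributes exactly two edges to \(\Sub_1^{\to}(F)\), so \(e(\Sub_1^{\to}(F))=2e(F)\). Hence the right-hand side \(t(\overrightarrow{K_2},W_f^\times)^{e(\Sub_1^{\to}(F))}\) equals \(\bigl(\E_g f(g)\bigr)^{2e(F)}\), and chaining the three facts gives the corollary.

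There is no genuine obstacle, since all the analytic work sits in Proposition~\ref{prop:subdivision-form}. The only point needing care is the edge-orientation bookkeeping. The subdivision vertex must sit on the right of \(x_{e^-}\) in the first factor, giving \(f(x_{e^-}z_e)\), and on the left of \(x_{e^+}\) in the second, giving \(f(z_ex_{e^+})\), so that the expression matches the proposition exactly. This placement is forced by the directions \(e^-\to e\to e^+\) and the convention that \(W(x_u,x_v)\) is evaluated with the tail first.

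The degenerate case \(e(F)=0\) is also consistent: both sides equal \(1\), as already noted in the proof of the proposition.
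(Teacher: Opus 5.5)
Your proposal is correct and follows the paper's own argument. You unfold $t(\Sub_1^{\to}(F),W_f^\times)$ into exactly the left-hand side of \eqref{eq:subdivision-form}, as in \eqref{eq:subdivision-product-density}, apply Proposition~\ref{prop:subdivision-form}, and then match the right-hand side using $\E_{x,y}f(xy)=\E_g f(g)$ and $e(\Sub_1^{\to}(F))=2e(F)$.
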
\begin{remark}\label{rmk:relation-to-conjugacy-averaged-subdivision}
The product Cayley formulation contains the conjugacy-averaged Cayley
formulation for \(1\)-subdivisions as a special case.

Let \(G=(L,R;E(G))\) be a finite bipartite graph. For a function
\(b:\Gamma\to\R\), write
\[
    t_{\mathrm{Cay}}(G;\Gamma,b)
    :=
    \E_{\alpha:L\to\Gamma,\ \beta:R\to\Gamma}
    \prod_{\ell r\in E(G)}
    b(\alpha(\ell)^{-1}\beta(r)).
\]
This is the homomorphism density of \(G\) in the bipartite Cayley kernel
\[
    K_b(x,y):=b(x^{-1}y).
\]
In particular,
\[
    t_{\mathrm{Cay}}(K_2;\Gamma,b)
    =
    \E_{x,y\in\Gamma}b(x^{-1}y)
    =
    \E_{g\in\Gamma}b(g).
\]

Let \(H_0=(V_0,E_0)\) be an undirected graph, and orient the edges of \(H_0\)
arbitrarily to obtain a directed graph \(F\). Let
\[
    H=\Sub(H_0)
\]
be the usual undirected \(1\)-subdivision of \(H_0\), regarded as a bipartite
graph with parts \(V_0\) and \(E_0\). Suppose that \(b:\Gamma\to\R\) is a class
function. Applying Corollary~\ref{cor:product-cayley-subdivision} to \(f=b\)
gives
\[
\begin{aligned}
    t(\Sub_1^{\to}(F),W_b^\times)
    =
    \E_{x:V_0\to\Gamma,\ z:E_0\to\Gamma}
    \prod_{e\in E_0}
    b(x_{e^-}z_e)b(z_e x_{e^+}).
\end{aligned}
\]
After the change of variables
\[
    x_v=\phi(v)^{-1}
    \qquad(v\in V_0),
\]
this becomes
\[
    \E_{\phi:V_0\to\Gamma,\ z:E_0\to\Gamma}
    \prod_{e\in E_0}
    b(\phi(e^-)^{-1}z_e)b(z_e\phi(e^+)^{-1}).
\]
Since \(b\) is a class function,
\[
    b(z_e\phi(e^+)^{-1})
    =
    b(\phi(e^+)^{-1}z_e),
\]
because
\[
    z_e\phi(e^+)^{-1}
    =
    \phi(e^+)\bigl(\phi(e^+)^{-1}z_e\bigr)\phi(e^+)^{-1}.
\]
Therefore
\[
\begin{aligned}
    t(\Sub_1^{\to}(F),W_b^\times)
    &=
    \E_{\phi:V_0\to\Gamma,\ z:E_0\to\Gamma}
    \prod_{e\in E_0}
    b(\phi(e^-)^{-1}z_e)b(\phi(e^+)^{-1}z_e)\\
    &=
    t_{\mathrm{Cay}}(\Sub(H_0);\Gamma,b).
\end{aligned}
\]
Thus Corollary~\ref{cor:product-cayley-subdivision} implies
\[
    t_{\mathrm{Cay}}(\Sub(H_0);\Gamma,b)
    \geq
    \left(\E_{g\in\Gamma}b(g)\right)^{2e(H_0)}.
\]
Since
\[
    e(\Sub(H_0))=2e(H_0)
\]
and
\[
    t_{\mathrm{Cay}}(K_2;\Gamma,b)=\E_{g\in\Gamma}b(g),
\]
this can be rewritten as
\[
    t_{\mathrm{Cay}}(\Sub(H_0);\Gamma,b)
    \geq
    t_{\mathrm{Cay}}(K_2;\Gamma,b)^{e(\Sub(H_0))}.
\]

Finally, for an arbitrary function \(a:\Gamma\to\R\), define its conjugacy-class
average by
\[
    a^{\mathrm{cl}}(g)
    :=
    \E_{h\in\Gamma} a(hgh^{-1}).
\]
Then \(a^{\mathrm{cl}}\) is a class function. Taking \(b=a^{\mathrm{cl}}\) in
the preceding inequality gives
\[
    t_{\mathrm{Cay}}(\Sub(H_0);\Gamma,a^{\mathrm{cl}})
    \geq
    t_{\mathrm{Cay}}(K_2;\Gamma,a^{\mathrm{cl}})^{e(\Sub(H_0))}.
\]
This is precisely the \(1\)-subdivision theorem for conjugacy-averaged Cayley
kernels, namely \cite[Theorem~1.4]{zhao2026conjugacy}.
\end{remark}

\section*{Declaration of Generative AI and AI-assisted technologies in the writing process}

During the preparation of this manuscript, the author used ChatGPT for language
polishing, editing assistance, and improving readability. The author has
reviewed the AI-assisted text and takes full responsibility for the content of
this manuscript. All mathematical ideas, statements, proofs, and final
verification are the author's own.

\bibliographystyle{abbrv}
\bibliography{ref}

\end{document}